\documentclass[a4paper,12pt]{amsart}
\usepackage[
hmarginratio={1:1},     
vmarginratio={1:1},     
textwidth=15.8cm,        
textheight=21cm,			
heightrounded,          
]{geometry}        

\usepackage[cp1250]{inputenc}

\usepackage{euler}
\usepackage{graphicx}
\usepackage{lpic}
\usepackage{longtable}
\usepackage{url}
\usepackage{import}
\usepackage{amsfonts, amstext, amsmath, amsthm, amscd, amssymb}
\usepackage[sc]{mathpazo}
\usepackage{enumitem, textcomp, setspace}
\usepackage{caption}
\usepackage{booktabs}

\numberwithin{equation}{section}

\theoremstyle{plain}
\newtheorem{theorem}{Theorem}[section]
\newtheorem{proposition}[theorem]{Proposition}
\newtheorem{lemma}[theorem]{Lemma}

\theoremstyle{definition}

\usepackage[]{hyperref}

\usepackage[all]{hypcap}

\definecolor{newblue}{rgb}{0.27, 0.32, 0.86}
\definecolor{newred}{rgb}{0.86, 0.32, 0.27}
\hypersetup{
	pagebackref=true,
	colorlinks=true,       
	linkcolor=newred,          
	citecolor=newblue,        
	filecolor=magenta,      
	urlcolor=newblue           
}

\usepackage[]{appendix}
\usepackage{wrapfig}

\providecommand{\subjclass}[1]{\textbf{\textit{2020 Mathematics Subject Classification:}} #1}

\graphicspath{ {/} } 

\title{Unity of Jones polynomials in the unit circle and the plane}
\author{Micha\l \;Jab\l onowski}
\address{Institute of Mathematics, Faculty of Mathematics, Physics and Informatics,\newline University of Gda\'nsk, 80-308 Gda\'nsk, Poland}

\email{michal.jablonowski@ug.edu.pl}

\date{\today}

\begin{document}

\subjclass[2020]{57K10 (primary)}

\maketitle

\begin{abstract}
	In this note, we study solutions of the equation $J_K(t)=1$ for the Jones polynomial of knots and links. For the family $K_n$ of double-twist knots, we show that every root of unity (except $-1$) satisfies $J_{K_n}(\zeta)=1$ for some $n$. Consequently, the set of solutions to $J_{K_n}(t)=1$ arising from this family is dense in the unit circle. We further show that there exists a family of links for which the zeros of $J_L(t)-1$ are dense in the complex plane, adapting the density mechanism of Jin--Zhang--Dong--Tay for Jones polynomial zeros.
\end{abstract}

\section{Introduction}

It is known that the zeros of Jones polynomials $J(t)$ of certain links (and the odd-parameter pretzel subfamily of knots) are dense in the whole complex plane \cite{JZDT10}, and the zeros of Jones polynomials of certain knots are dense in the unit circle \cite{ChaKof05, Mro22}. In this paper, we prove that the zeros of polynomials $J(t)-1$ are dense in the whole complex plane (similarly to \cite{JZDT10}), and that the zeros of polynomials $J(t)-1$ arising from a family of non-trivial knots are dense in the unit circle. 

It is a well-known open problem if any non-trivial knot has $J(t)\equiv1$. The unit circle density of solutions for $J(t)=1$ for non-trivial knots is a result of a computation of zeros of $J(t)-1$ for prime knots up to $16$ crossings in a minimal diagram, visualized in Figure \ref{JWmid_rootsUPto16} (where the "density" of points in the unit circle arcs is visible when the drawing is enlarged). The range of roots in this figure includes $\text{Re}(t)\in[-1,1]$, $\text{Im}(t)\in[0,2]$. Points corresponding to alternating knots are shown in red, and those corresponding to non-alternating knots are shown in blue (under the red points). These points were computed using SageMath \cite{SageMath}.

\begin{figure}[h!t]
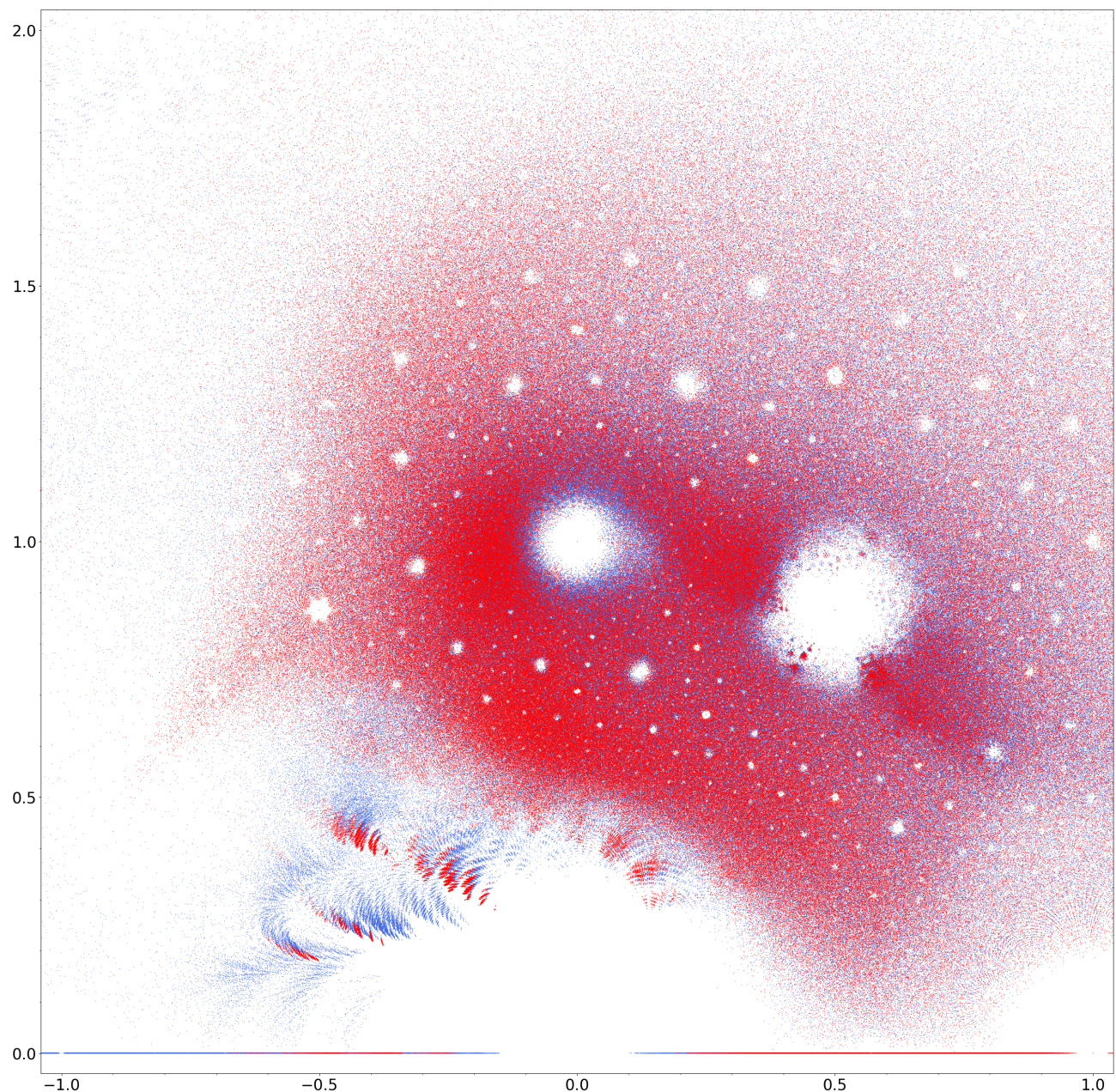

	\begin{center}
		\begin{lpic}[]{JWmid_rootsUPto16altN(17cm)}
		\end{lpic}
		\caption{Solutions to $J(t)=1$ for prime knots up to $16$ crossings.\label{JWmid_rootsUPto16}}
	\end{center}
\end{figure}

\section{Density in the unit circle}

Let $K_n=C(2n,3)$ be the double--twist knot with Conway notation $C(2n,3)$, $n\ge 1$, with one block of $3$ half-twists and one block of $2n$ half-twist crossings. The Rolfsen names for the cases $n=1,2,3$ are $C(2,3)=5_2$, $C(4,3)=7_3$, and $C(6,3)=9_3$.

\pagebreak

\begin{theorem}
	Let $J_n(t)$ denote the Jones polynomial of $K_n$ normalized so that $J_n(1)=1$.
	
	Then
	$
	\left\{t\in S^1:\exists n\ge1,\ J_n(t)=1\right\}
	$
	is dense in the unit circle $S^1$. More precisely,
	\[
	\mu_\infty\setminus\{-1\}
	\subset
	\{\,t\in S^1:\exists n\ge1,\ J_n(t)=1\,\},
	\]
	where $\mu_\infty$ denotes the set of all roots of unity.
\end{theorem}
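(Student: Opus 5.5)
The plan is to obtain an explicit closed formula for $J_n(t)$ in terms of $n$ and then show that $J_n(t)-1$ contains the factor $t^{2n}-1$ (up to a unit and a fixed rational factor). That reduces the statement to choosing $n$ as a multiple of the order of the given root of unity.

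\emph{Step 1 (recurrence).} I apply the Jones skein relation $t^{-1}J_{L_+}-tJ_{L_-}=(t^{1/2}-t^{-1/2})J_{L_0}$ at a crossing of the block of $2n$ half-twists. Changing that crossing turns $C(2n,3)$ into $C(2n-2,3)$, and smoothing it gives a two-component link (in effect a $(2,3)$-torus-type link, independent of $n$) or its twisted relatives. Iterating, I get a linear recurrence in $n$ with constant (in $n$) coefficients in $\mathbb{Z}[t^{\pm1/2}]$. I expect its characteristic roots to be $1$ and $t^{2}$ (or $t^{-2}$, depending on orientation conventions). The key point is that the two strands of the $2n$-twist region are oppositely oriented, so each full twist multiplies by a pure power of $t$.

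\emph{Step 2 (closed form).} From Step 1 I write
\[
J_n(t)=\alpha(t)+\beta(t)\,t^{2n}
\]
with rational functions $\alpha,\beta$ independent of $n$. The initial term $n=0$ is $C(0,3)$, whose rational-tangle fraction is $3+1/0=\infty$, so it is the unknot and $J_0=1$; hence $\alpha+\beta=1$. I will fix $\beta$ from $J_1=J_{5_2}$. Therefore
\[
J_n(t)-1=\beta(t)\bigl(t^{2n}-1\bigr).
\]
I expect $\beta(t)=\gamma(t)/(t^2-1)$ (or with denominator $t+1$) for some Laurent polynomial $\gamma$. In that case
\[
J_n(t)-1=\gamma(t)\,\frac{t^{2n}-1}{t^{2}-1}
\]
is a genuine Laurent polynomial, which I will check against $5_2$, $7_3$ and $9_3$.

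\emph{Step 3 (roots of unity).} Let $\zeta$ be a root of unity of order $m$ with $\zeta\neq\pm1$. Take $n=m$ (any $n\ge1$ that is a multiple of $m$ works). Then $\zeta^{2n}=1$ while $\zeta^2\neq1$, so the quotient $(\zeta^{2n}-1)/(\zeta^2-1)$ vanishes and $J_n(\zeta)=1$. For $\zeta=1$ the normalization $J_n(1)=1$ gives the claim directly. Density in $S^1$ follows because $\mu_\infty\setminus\{-1\}$ is dense.

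\emph{Main obstacle.} The delicate part is Step 2 at $t=\pm1$: confirming exactly what the denominator of $\beta$ is, so that the vanishing of $t^{2n}-1$ is not cancelled by a pole. The exclusion of $-1$ must come precisely from such a cancellation. At $t=-1$ the quotient evaluates to $n\gamma(-1)$, which is nonzero because $|J_K(-1)|$ is the knot determinant $8n+3\ne 1$. I will handle this by carrying out the skein computation carefully with sign and orientation conventions fixed, and by verifying the resulting formula against the known polynomials of $5_2$ and $7_3$.
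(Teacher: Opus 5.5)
\textbf{There is a genuine error in Steps 1--2.} They rest on a false orientation claim. In $C(2n,3)$ the two strands of the $2n$-crossing twist region are \emph{coherently} oriented, not oppositely oriented. You can see this from the genus: $5_2$, $7_3$, $9_3$ have genus $1,2,3$, whereas an anti-parallel twist region only contributes a twisted band to the Seifert surface and would keep the genus constant in $n$.

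So the oriented smoothing of a twist crossing is $C(2n-1,3)$, a two-component link that changes with $n$. The skein relation then gives a second-order recurrence in the number of half-twists. Its characteristic roots per full twist are $t^{-1}$ and $t^{-3}$ (for the chirality with $J_1=t^{-1}-t^{-2}+2t^{-3}-t^{-4}+t^{-5}-t^{-6}$), not $1$ and $t^{\pm 2}$. Indeed the paper's closed form is $J_n=A\,t^{-n}+B\,t^{-3n}$ with $A=(t^{-3}+t^{-1}+t)/(1+t)$ and $B=(1-t^{-1}-t^{-3})/(1+t)$.

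Consequently $J_n-1=\beta(t)(t^{2n}-1)$ is false. Together with $J_0=1$ it would force $J_2=1+(1+t^{\pm 2})(J_1-1)$, whose value at $t=-1$ is $-15$, while $\det 7_3=13$. The check against $7_3$ that you planned would fail. Also, $\det K_n=6n+1$, not $8n+3$; the paper gets $J_n(-1)=(-1)^n(6n+1)$. Nothing about $t=-1$ is actually needed for the stated inclusion.

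\textbf{How to repair it.} Your mechanism only needs $J_0=1$ and characteristic roots that are monomials in $t$, and both survive. Since $A+B=J_0=1$,
\[
J_n(t)-1=A\,(t^{-n}-1)+B\,(t^{-3n}-1)=(t^{-n}-1)\bigl(A+B\,(1+t^{-n}+t^{-2n})\bigr),
\]
and $A,B$ are regular on $S^1\setminus\{-1\}$. Hence $\zeta^n=1$ with $\zeta\neq-1$ gives $J_n(\zeta)=1$. This is exactly the factor $x-1$, with $x=\zeta^n$, of the paper's cubic. Your Step 3 choice $n=\operatorname{ord}(\zeta)$ (or any multiple) then works verbatim, including $\zeta=1$. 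The difference is that the vanishing factor is $t^{n}-1$ rather than $(t^{2n}-1)/(t^2-1)$, and the only pole to watch is the denominator $1+t$.
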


\begin{proof}
	The knot $K_n$ is alternating and nontrivial. For $n\ge 1$, the corresponding rational fraction is $(6n+1)/3$.
	
	The Jones polynomial admits the exact closed form (see \cite{LawRos25} for generalizations)
	\[
	J_n(t)
	=
	(t^{-3n-3})\cdot\,
	\frac{t^{2n}(1+t^2+t^4)+t^3-t^2-1}{1+t}.
	\]
	
	Let $\zeta$ be a root of unity with $\zeta\neq-1$.
	We solve
	\[
	J_n(\zeta)=1 .
	\]
	
	Multiplying the defining equation by $t^{3n+3}(1+t)$ gives the equivalent relation
	\[
	t^{2n}(1+t^2+t^4)+t^3-t^2-1
	=
	t^{3n+3}(1+t).
	\]
	
	Substituting $t=\zeta$ and writing $x=\zeta^n$, this becomes
	\[
	x^2(1+\zeta^2+\zeta^4)+\zeta^3-\zeta^2-1
	=
	x^3\zeta^3(1+\zeta).
	\]
	
	Rearranging yields the cubic equation
	\[
	\zeta^3(1+\zeta)x^3-(1+\zeta^2+\zeta^4)x^2+(1+\zeta^2-\zeta^3)=0.
	\]
	
	This polynomial factors as
	\[
	(x-1)
	\Big(
	\zeta^3(1+\zeta)x^2
	+
	(\zeta^3-1-\zeta^2)(x+1)
	\Big)
	=0 .
	\]
	
	One solution is
	$
	x=1.
	$
	
	Since $x=\zeta^n$, this condition is equivalent to
	\[
	\zeta^n=1.
	\]
	
	If $N=\operatorname{ord}(\zeta)$, then $n=N$ satisfies this condition.
	Substituting $n=N$ into the defining equation yields
	\[
	J_N(\zeta)=1.
	\]
	
	Thus every root of unity $\zeta\neq-1$ satisfies $J_n(\zeta)=1$ for some $n$.
	
	It remains to consider $\zeta=-1$.
	Because the formula has a removable singularity at $t=-1$, define
	\[
	N_n(t)=t^{2n}(1+t^2+t^4)+t^3-t^2-1.
	\]
	
	Then
	\[
	J_n(-1)
	=
	(-1)^{-3n-3}
	\lim_{t\to-1}\frac{N_n(t)}{1+t}
	=
	(-1)^{-3n-3}N_n'(-1)
	=
	(-1)^n(6n+1),
	\]
	where the second equality follows from l'H\^opital's rule.
	Hence
	\[
	J_n(-1)\neq1
	\quad
	\text{for all } n\ge1.
	\]
	
	Therefore
	\[
	\mu_\infty\setminus\{-1\}
	\subset
	\{\,t\in S^1:\exists n,\ J_n(t)=1\,\}.
	\]
	
	Since the set of roots of unity is dense in $S^1$, the set 
	$
	\{\,t\in S^1:\exists n,\ J_n(t)=1\,\}
	$ 
	
	is also dense in $S^1$.
\end{proof}

\section{Density in the complex plane}

\begin{proposition}\label{prop:BKW3}
	There exists a family of links $\{L_n\}_{n\ge 1}$ such that the zeros of
	\[
	J_{L_n}(t)-1
	\]
	are dense in the complex plane $\mathbb{C}$.
\end{proposition}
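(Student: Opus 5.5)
We follow the mechanism of Jin--Zhang--Dong--Tay \cite{JZDT10}, which rests on the Beraha--Kahane--Weiss theorem. The label \texttt{prop:BKW3} already points to this, in its three-term form. Take links $L_n$ built from a fixed tangle by inserting $n$ copies of a twisting tangle, for instance pretzel-type links $P(n,n,n)$ or a $2$-bridge family with three twist regions growing together. The transfer-matrix (skein) structure then gives, for each fixed $t$,
\[
J_{L_n}(t)=\sum_{i=1}^{3}\alpha_i(t)\,\lambda_i(t)^n ,
\]
with $\alpha_i,\lambda_i$ algebraic in $t$. For twist regions the eigenvalues are $\lambda\in\{1,-t^{\pm1}\}$ or similar monomials, up to the normalization powers of $t^{1/2}$. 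The key observation is that $J_{L_n}(t)-1$ has the same form, with one extra term $-1\cdot 1^n$. If $1$ is already among the $\lambda_i$, this term simply merges into that coefficient, replacing $\alpha_i$ by $\alpha_i-1$. Otherwise we obtain a four-term sum, and the BKW theorem still applies to any finite number of terms.

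Recall the BKW theorem. Suppose the sum is nondegenerate: no $\alpha_i\equiv0$, and no $\lambda_i\equiv\omega\lambda_j$ with $|\omega|=1$ for $i\ne j$. Then the limit points of zeros of $\sum\alpha_i\lambda_i^n$ are exactly two kinds of points. The first are the points where two or more of the $\lambda_i$ share the maximal modulus. The second are the points where a single $\lambda_i$ has strictly maximal modulus but its coefficient $\alpha_i$ vanishes.

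The plan therefore has three steps.

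\begin{enumerate}
\item Fix the family and compute explicitly the $\alpha_i$ and $\lambda_i$ for $J_{L_n}$. We would use the same family as \cite{JZDT10} and take their formulas, multiplied by the appropriate power of $t$ (or $t^{1/2}$) so that the expansion is an honest exponential polynomial in $n$.
\item Absorb $-1$ into the expansion and check nondegeneracy. The eigenvalue $1$, possibly after normalization, is typically one of $\lambda_1,\lambda_2,\lambda_3$. Then the only change from \cite{JZDT10} is that one coefficient becomes $\alpha_i-1$. We must verify that $\alpha_i-1\not\equiv0$.
\item Rerun the density argument of \cite{JZDT10} with the modified coefficient.
\end{enumerate}

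For the third step, recall that in \cite{JZDT10} density came from the equimodular curves $|\lambda_i|=|\lambda_j|$ together with the isolated zeros of the dominant coefficients. More precisely, they showed that the set of $t$ where two eigenvalues are dominant and equimodular, over a suitable parameter family, fills a dense set. The equimodular curves depend only on the $\lambda_i$, so they are unchanged by subtracting $1$. The only input that changes is the vanishing set of the modified coefficient, and the BKW conclusion only needs that set to be a proper analytic subset, i.e.\ isolated points.

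The main obstacle is step 2, and its form depends on normalization. If the Jones polynomial carries a prefactor like $(-t^{1/2})^{\text{writhe}}$ that depends on $n$, then the constant $1$ is \emph{not} of the form $\alpha\cdot\lambda^n$ with $\lambda$ among the original eigenvalues. In that case we get a genuinely new eigenvalue $\lambda_0=1$, or $t^{-c}$ after clearing the prefactor, and the equimodular curves change. If that happens, we would exploit the freedom in choosing the family: pick $L_n$ whose writhe is independent of $n$, such as pretzel links with both positive and negative twist parameters growing together. Alternatively, we can add the curves $|\lambda_0|=|\lambda_i|$ to the limit set. Adding curves only enlarges the limit set, but it can also change which eigenvalue is dominant where. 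So we would re-verify density directly: a BKW equimodular region of the form $|t|^a=|1-\dots|$, as $a$ ranges over a parameter, sweeps out the whole plane. We would then conclude by picking, for each disk, a parameter value for which some equimodular dominant curve meets it. This gives a sequence of zeros of $J_{L_n}(t)-1$ accumulating in every open disk, which is density.
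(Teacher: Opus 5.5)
There is a genuine gap, and it sits exactly at what you call the ``main obstacle''. Your preliminary steps also rest on assumptions that fail for the relevant family.

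\textbf{The wrong family and the wrong premise.} The concrete families you suggest cannot give density. In $P(n,n,n)$, or in any family where only the number of twists grows, each twist region contributes to the Kauffman bracket only the powers $A^{n}$ and $(-A^{-3})^{n}$. So every eigenvalue, including the writhe factor and the constant $1$, has the form $\pm t^{k/4}$. Monomials with different exponents are equimodular only on $|t|=1$. Hence the zeros of $J_{L_n}(t)-1$ for such a family accumulate only on the unit circle and at isolated points.

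The mechanism of \cite{JZDT10} is different. It puts $n$ copies of a \emph{fixed} tangle $T$ in a ring, so that $J_{L_n(T)}=a\Lambda_1^n+b\Lambda_2^n$ up to a monomial prefactor. Here $\Lambda_1,\Lambda_2$ are non-monomial functions of $t$ depending on $T$: for $T=I_s^{+}$ they are, up to a common factor, $1-(-t)^s$ and $1+(t+t^{-1}+1)(-t)^s$. Density comes from varying $T$.

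Consequently, for the tangles that matter, neither normalized eigenvalue is identically $1$, whether or not the writhe depends on $n$. Your step 2 (``$1$ is typically among the $\lambda_i$, only a coefficient changes'') therefore does not apply. Neither does the premise of step 3 (``the equimodular curves are unchanged''). You are always in the situation
\[
J_{L_n(T)}(t)-1=a\Lambda_1^n+b\Lambda_2^n-1\cdot 1^n ,
\]
with a genuinely new eigenvalue $1$.

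\textbf{The dominance gap.} In this situation your remark that the extra term ``only enlarges the limit set'' is false. Suppose $|\Lambda_1(t_*)|=|\Lambda_2(t_*)|<1$. Then the constant term is strictly dominant on a neighbourhood of $t_*$, and its coefficient $-1$ never vanishes. So $J_{L_n(T)}-1\to -1$ uniformly near $t_*$, and by Beraha--Kahane--Weiss $t_*$ is \emph{not} a limit of zeros. The equimodular points provided by \cite{JZDT10} are usable only where their common modulus is at least $1$.

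The missing step is therefore this: for every $t_0$ and $\varepsilon$, produce a tangle whose equimodular point $t_*$ within $\varepsilon/2$ of $t_0$ also satisfies $|\Lambda_1(t_*)|=|\Lambda_2(t_*)|\ge 1$. The paper addresses precisely this point. It replaces an edge of the tangle's signed graph by $I_s^{\pm}$, which it uses to multiply both eigenvalues by a common factor $c_\pm(t)^s$. This preserves $|\Lambda_1|=|\Lambda_2|$ while pushing the common modulus above $1$, and the three-term lemma then applies at $t_*$.

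Your alternative route, via the new limit curves $\max(|\Lambda_1|,|\Lambda_2|)=1$, is legitimate in principle. However, you only assert that a family of curves $|t|^a=|1-\cdots|$ sweeps out the plane, and nothing in the proposal controls dominance over the constant term.
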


The Jin–Zhang–Dong–Tay \cite{JZDT10} mechanism adapts from $J$ to $J-1$, with extra steps.

\begin{lemma}[Beraha--Kahane--Weiss \cite{BKW78}]\label{lem:BKW}
	Let
	\[
	F_n(z)=\sum_{j=1}^k \alpha_j(z)\lambda_j(z)^n
	\]
	be a sequence of analytic functions in a domain $D\subset\mathbb{C}$,
	where $\alpha_j(z)$ and $\lambda_j(z)$ are analytic and the $\lambda_j$
	are not identically proportional.  
	
	If $z_0\in D$ satisfies
	\[
	|\lambda_i(z_0)|=|\lambda_j(z_0)|>\max_{k\neq i,j}|\lambda_k(z_0)|,
	\]
	then $z_0$ is a limit point of zeros of $\{F_n\}$.
\end{lemma}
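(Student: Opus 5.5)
We would prove Lemma~\ref{lem:BKW} by a direct local argument: pass to the ratio of the two dominant eigenvalues, exhibit explicit zeros of a two-term model function near $z_0$, and transfer them to $F_n$ with Rouch\'e's theorem. We assume the standard nondegeneracy hypothesis of Beraha--Kahane--Weiss, namely $\alpha_i(z_0)\neq 0$ and $\alpha_j(z_0)\neq 0$. Without it the statement can fail: for instance, if $\alpha_j\equiv 0$, then $F_n$ is dominated by the single term $\alpha_i\lambda_i^n$ and need not vanish near $z_0$. We would record this as implicit in the lemma.

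\textbf{Step 1 (normalization).} Fix a small closed disk $U$ around $z_0$ on which $\alpha_i,\alpha_j,\lambda_i$ are zero-free. This is possible because $|\lambda_i(z_0)|>\max_{k\ne i,j}|\lambda_k(z_0)|\ge 0$. By continuity we may also assume there is $\rho<1$ with
\[
|\lambda_k/\lambda_i|\le\rho \quad\text{on } U \text{ for all } k\ne i,j.
\]
Set $r=\lambda_j/\lambda_i$ and $c=\alpha_j/\alpha_i$, and write
\[
g_n:=\frac{F_n}{\alpha_i\lambda_i^n}=1+c\,r^n+E_n,\qquad E_n:=\sum_{k\ne i,j}\frac{\alpha_k}{\alpha_i}\Big(\frac{\lambda_k}{\lambda_i}\Big)^n .
\]
Then $\sup_U|E_n|\le C\rho^n$, and $g_n$ has the same zeros as $F_n$ on $U$. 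Since the $\lambda$'s are not proportional, $r$ is nonconstant. Moreover $|r(z_0)|=1$, and $c(z_0)\ne0$.

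\textbf{Step 2 (zeros of the model).} Near $z_0$ we have $r(z)=r(z_0)+a(z-z_0)^m+\cdots$ with $a\neq0$ and some $m\ge1$. Hence, for every $\varepsilon>0$, $r$ maps the disk $B(z_0,\varepsilon)$ onto a neighbourhood of $r(z_0)$ on the unit circle. The equation $1+c(z)r(z)^n=0$ means $r(z)=\omega$ for some $\omega$ with $\omega^n=-1/c(z)$. These candidate values of $\omega$ have modulus $|c|^{-1/n}\to1$, and their arguments are spaced by $2\pi/n$. So for large $n$ some such $\omega$ lies within $O(1/n)$ of $r(z_0)$.

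To handle the $z$-dependence of $c$, we would apply the argument principle to $1+c r^n$ along a small loop. A cleaner route is to use a local coordinate $\zeta$ with $r=r(z_0)e^{\zeta^m}$ and take $n$-th logarithms, which produces a root $z_n$ of $1+cr^n$ with $|z_n-z_0|\to0$.

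\textbf{Step 3 (Rouch\'e), the main obstacle.} The remaining task is to show that $g_n$ itself vanishes near $z_n$. Take the circle $|z-z_n|=\delta_n$ with $\delta_n=\kappa/n$ for a small constant $\kappa$. On this circle, $n\log r(z)$ moves by an amount of order $\kappa\, n\,|r'|\,\delta_n$ (or the $m$-th-power analogue when $m>1$), which is bounded away from $2\pi i\mathbb{Z}$. Hence $|1+cr^n|\ge\eta>0$ on the circle, with $\eta$ independent of $n$. Since $|E_n|\le C\rho^n<\eta$ for large $n$, Rouch\'e's theorem gives a zero of $g_n$, and hence of $F_n$, inside $B(z_n,\delta_n)$. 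These zeros converge to $z_0$, so $z_0$ is a limit point of zeros of $\{F_n\}$.

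The delicate part is making the lower bound $\eta$ uniform in $n$, especially when $r'(z_0)=0$. We would handle this by working in the coordinate $\zeta$ where $r$ becomes $e^{\zeta^m}$ up to a constant. If that proves awkward, we would fall back on the alternative normal-families argument: if $F_n$ had no zeros on $U$ for large $n$, then $\frac1n\log|g_n|$ would be harmonic and converge to $\max(0,\log|r|)$, which is not harmonic across $|r|=1$. This contradicts the fact that locally uniform limits of harmonic functions are harmonic.
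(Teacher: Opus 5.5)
The paper gives no proof of this lemma; it is quoted from \cite{BKW78}, so your argument has to stand on its own. Your caveat on the coefficients is a genuine correction of the printed statement: with $\alpha_j\equiv 0$ and $\alpha_i(z_0)\neq 0$ the conclusion is false. Nonvanishing at $z_0$ is more than you need, though; $\alpha_i,\alpha_j\not\equiv 0$ already suffices. Near $z_0$ the set $\{|r|=1\}$ consists of $2m$ arcs issuing from $z_0$. Applying your version at nearby points of these arcs where $\alpha_i\alpha_j\neq 0$ gives the conclusion at $z_0$.

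\textbf{The gap is in Step 3 when $m\ge 2$.} For $m=1$ your estimate is right: on $|z-z_n|=\kappa/n$ the quantity $n\log r$ moves by about $\kappa|r'(z_0)|$ (not $\kappa n|r'|\delta_n$). For $m\ge 2$ it fails:
\begin{itemize}
\item $z_n$ lies within $O(n^{-1/m})$ of $z_0$, so on that circle $n\log r$ varies only by $O(n^{-(m-1)/m})$.
\item Hence $1+cr^n\to 0$ uniformly on the circle, and no $n$-independent $\eta$ exists.
\item The zeros of the model near $z_0$ cluster at scale $n^{-1/m}$ with uncontrolled mutual distances. The circle may therefore pass arbitrarily close to another one, so not even a polynomial lower bound is guaranteed.
\end{itemize}

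The coordinate $\zeta$ you mention does close the gap, and it makes Step 2 unnecessary. Shrink $U$ so that $|E_n|<\tfrac12$, write $r(z_0)=e^{i\theta_0}$, fix a branch of $\log(-1/c)$, and choose $k_n\in\mathbb{Z}$ with $|2\pi k_n-n\theta_0|\le\pi$. Then every solution of
\[
\zeta^m=W_n:=\frac1n\Bigl(\log(-1/c)+\log(1+E_n)+i(2\pi k_n-n\theta_0)\Bigr)
\]
is a zero of $g_n$, and $|W_n|\le M/n$ with $M$ independent of $n$. On $|\zeta|=(2M/n)^{1/m}$ we have $|\zeta^m|=2M/n>|W_n|$. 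Rouch\'e then gives zeros of $F_n$ within $O(n^{-1/m})$ of $z_0$ for every large $n$, with no lower bound on $|1+cr^n|$ needed.

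Your fallback is essentially the standard Sokal-style proof of the BKW theorem, but you must justify the locally uniform convergence. A priori $\frac1n\log|g_n|$ converges only pointwise off $\{|r|=1\}$. The fix is to apply Montel to the bounded analytic branches $g_n^{1/n}$ on the disk $U$. A subsequential limit $\phi$ satisfies $|\phi|=1$ on $\{|r|<1\}$, so it is a unimodular constant. This contradicts $|\phi|=|r|>1$ on $\{|r|>1\}$. If you run the same argument on $F_n^{1/n}$, it needs only $\alpha_i,\alpha_j\not\equiv 0$: the limit would have modulus $|\lambda_i|$ on one open set and $|\lambda_j|$ on another, forcing $\lambda_j/\lambda_i$ to be constant. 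The Rouch\'e route, by contrast, also gives the rate $O(n^{-1/m})$.
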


\begin{proof}[Proof of Proposition \ref{prop:BKW3}]
	Jin--Zhang--Dong--Tay \cite{JZDT10} construct families of links
	$L_n(T)$ (and the odd-parameter pretzel subfamily of knots) obtained by forming rings of identical tangles $T$.
	For such families, the Jones polynomial admits a representation
	(up to multiplication by a monomial $\pm t^{k/2}$) of the form
	\[
	J_{L_n(T)}(t)
	=
	a(t)\Lambda_1(t)^n+b(t)\Lambda_2(t)^n .
	\]
	
	Define
	\[
	JW_n(t)=J_{L_n(T)}(t)-1
	=
	a(t)\Lambda_1(t)^n+b(t)\Lambda_2(t)^n+(-1)\cdot 1^n.
	\]
	
	Let $t_0\in\mathbb{C}$ and $\varepsilon>0$.
	By Lemma~3.4 of \cite{JZDT10}, there exists a tangle $T$ and a point $t_*$ satisfying
	\[
	|t_*-t_0|<\varepsilon/2
	\]
	such that
	\[
	|\Lambda_1(t_*)|=|\Lambda_2(t_*)|.
	\]
	Moreover, the constructions $I_s^+$ and $I_s^-$ in \cite{JZDT10}
	allow the eigenvalues $\Lambda_1,\Lambda_2$ to be scaled
	multiplicatively, as follows.
	
	Let $H$ be a signed plane graph corresponding to a $2$--tangle $T$.

	Let $I_s$ be the graph with two vertices joined by $s$ parallel edges.
	Denote by $I_s^+$ and $I_s^-$ the signed graphs obtained by assigning all edges
	positive and negative signs respectively.
	
	For $I_s^+$ the coefficients appearing in the Tutte polynomial construction gives the relation
	\[
	|1-(-t)^s|
	=
	|1+(t+t^{-1}+1)(-t)^s|.
	\]
	
	Thus, the parameter $s$ enters only through the power $t^s$.
	
	Then the eigenvalues of the transfer matrix corresponding to $T_s^{\pm}$ satisfy
	\[
	\Lambda_i^{(s)}(t) = c_{\pm}(t)^s \, \Lambda_i(t), \qquad i=1,2,
	\]
	for some nonzero function $c_{\pm}(t)$ depending only on the sign choice.
	Consequently,
	\[
	|\Lambda_1^{(s)}(t)| = |c_{\pm}(t)|^s |\Lambda_1(t)|,
	\qquad
	|\Lambda_2^{(s)}(t)| = |c_{\pm}(t)|^s |\Lambda_2(t)|.
	\]
	
	Therefore replacing an edge of $H$ by $I_s^{\pm}$ multiplies each
	eigenvalue $\Lambda_i(t)$ by a common factor $c_{\pm}(t)^s$.
	
	The same computation for $I_s^-$ yields the factor $t^{-s}$.
	Hence, in both cases, the eigenvalues are scaled multiplicatively while preserving their ratio.
	
	Hence the equality
	$|\Lambda_1(t_*)|=|\Lambda_2(t_*)|$
	can be arranged so that
	
	$|\Lambda_1(t_*)|=|\Lambda_2(t_*)|>1$.
	
	Thus $JW_n$ is of the form required in Lemma~\ref{lem:BKW}, with three eigenvalues
	$\Lambda_1,\Lambda_2,$ and $1$. Since
	\[
	|\Lambda_1(t_*)|=|\Lambda_2(t_*)|>|1|,
	\]
	Lemma~\ref{lem:BKW} implies that $t_*$ is a limit point of zeros of $\{JW_n\}$.
	
	Hence for some $n$ there exists a zero $t$ of $JW_n$ such that
	\[
	|t-t_*|<\varepsilon/2.
	\]
	Therefore
	$
	|t-t_0|<\varepsilon.
	$
	
	Since $t_0$ and $\varepsilon$ were arbitrary, the zeros of
	$J_{L_n}(t)-1$ are dense in $\mathbb{C}$.
\end{proof}


\begin{thebibliography}{99}
	
	
	\bibitem{BKW78} S. Beraha, J. Kahane, and N.J. Weiss, Limits of zeros of recursively defined families of polynomials, \emph{Studies in Foundations and Combinatorics, Advances in Mathematics Supplementary Studies}, 1 (1978) 212--232.
	
	\bibitem{ChaKof05} A. Champanerkar, I. Kofman, On the Mahler measure of Jones polynomials under twisting, \emph{Algebraic \& Geometric Topology}, 5(1) (2005) 1--22.
	
	\bibitem{SageMath} Developers, The~Sage, \emph{{S}agemath, the {S}age {M}athematics {S}oftware {S}ystem ({V}ersion 10.6)}, (2026),\\ \url{https://www.sagemath.org}
	
	\bibitem{JZDT10} X. Jin, F. Zhang, F. Dong, E.G. Tay, Zeros of the Jones polynomial are dense in the complex plane, \emph{The Electronic Journal of Combinatorics}, (2010) \#R94.
	
	\bibitem{LawRos25} R. Lawrence and O. Rosenstein, Jones rational coincidences, \emph{Journal of Knot Theory and Its Ramifications}, Vol. 34.03 (2025) 2340015.
	
	\bibitem{Mro22} M. Mroczkowski, Infinitely Many Roots of Unity Are Zeros of Some Jones Polynomials, \emph{Geometriae Dedicata}, 216, no. 4 (2022) 43.
	
	
	
\end{thebibliography}
\end{document}